\newcommand{\R}{\mathbb{R}}
\newcommand{\IP}{\mathcal{P}}
\newcommand{\mat}[1]{\mathbf{#1}}
\newcommand{\tv}[1]{\hl{TV: #1}}
\newcommand{\para}[1]{\vspace{.5em}\noindent\textbf{#1}~}
\begin{document}

%
\title{
    Functional Liftings of Vectorial Variational Problems with Laplacian
    Regularization
}
\titlerunning{Functional Liftings of Vectorial Problems with Laplacian Regularization}
%
\author{
    Thomas Vogt
    \and
    Jan Lellmann
}
\authorrunning{
    T. Vogt and J. Lellmann
}
%
\institute{
    University of Lübeck,
    Institute of Mathematics and Image Computing (MIC),
    Maria-Goeppert-Str. 3, 23562 Lübeck
    \email{\{vogt,lellmann\}@mic.uni-luebeck.de}
}
\maketitle              
\begin{abstract}
We propose a functional lifting-based convex relaxation of variational problems
with Laplacian-based second-order regularization.
The approach rests on ideas from the calibration method as well
as from sublabel-accurate continuous multilabeling approaches, and
makes these approaches amenable for variational problems with vectorial data
and higher-order regularization, as is common in image processing
applications.
We motivate the approach in the function space setting and prove that, in the
special case of absolute Laplacian regularization, it encompasses the
discretization-first sublabel-accurate continuous multilabeling approach as a special case.
We present a mathematical connection between the lifted and original functional
and discuss possible interpretations of minimizers in the lifted function
space.
Finally, we exemplarily apply the proposed approach to 2D image registration problems.

\keywords{
         variational methods
    \and curvature regularization
    \and convex relaxation
    \and functional lifting
    \and measure-based regularization.}
\end{abstract}

%

\section{Introduction}
Let $\Omega \subset \R^d$ and $\Gamma \subset \R^s$ both be bounded sets.
In the following, we consider the variational problem of minimizing the functional
\begin{equation}
    F(u) = \int_\Omega f(x,u(x),\Delta u(x)) dx,\label{eq:lapl-intro}
\end{equation}
that acts on vector-valued functions $u \in C^2(\Omega;\Gamma)$.
Convexity of the integrand $f\colon \Omega \times \Gamma \times \R^s \to \R$ is
only assumed in the last entry, so that $u \mapsto F(u)$ is generally
\emph{non-convex}. The Laplacian $\Delta u$ is understood component-wise and reduces to $u''$ if the domain $\Omega$ is one-dimensional.

Variational problems of this form occur in a wide variety of image processing tasks, including image reconstruction, restoration, and interpolation.
Commonly, the integrand is split into data term and regularizer:
\begin{equation}\label{eq:f-decomp}
    f(x,z,p) = \rho(x,z) + \eta(p).
\end{equation}
As an example, in \emph{image registration} (sometimes referred to as large-displace\-ment optical flow), the data term $\rho(x,z) = d(R(x),T(x+z))$ encodes the pointwise distance of a reference
image $R\colon \R^d \to \R^k$ to a deformed template image
$T\colon \R^d \to \R^k$ according to a given distance measure $d(\cdot,\cdot)$,
such as the squared Euclidean distance $d(a,b) = \frac{1}{2}\|a-b\|_2^2$.
While often a suitable convex regularizer $\eta$ can be found, the highly non-convex nature of $\rho$ renders the search for global minimizers of
\eqref{eq:lapl-intro} a difficult problem.

Instead of directly minimizing $F$ using gradient descent or other local
solvers, we will aim to \emph{replace} it by a convex functional $\mathcal{F}$
that acts on a higher-dimensional (\emph{lifted}) function space.
If the lifting is chosen in such a way that we can construct global minimizers
of $F$ from global minimizers of $\mathcal{F}$, we can find a global solution
of the original problem by applying convex solvers to~$\mathcal{F}$. While  we cannot claim this property for our choice of lifting, we believe that the mathematical motivation and some of the experimental
results show that this approach can be a good basis for future work on global solutions of variational models with higher-order regularization.


\para{Calibrations in variational calculus}
The lifted functional $\mathcal{F}$ proposed in this work is motivated by
previous lifting approaches for \emph{first-order} variational problems of the form 

\begin{equation}\label{eq:first-order-functional}
    \min_u F(u) = \int_\Omega f(x,u(x),\nabla u(x)) dx,
\end{equation}
where $F$ acts on functions $u\colon \Omega \to \Gamma$ with
$\Omega \subset \R^d$ and scalar range $\Gamma \subset \R$.

The \emph{calibration method} as introduced in \cite{alberti2003} gives a globally
sufficient optimality condition for functionals of the
form \eqref{eq:first-order-functional} with $\Gamma = \R$.
Importantly, $f(x,z,p)$ is not required to be convex in $(x,z)$, but only in $p$.
The method states that $u$ minimizes $F$ if there exists a divergence-free
vector field $\phi\colon \Omega \times \R \to \R^{d+1}$ (a \emph{calibration}) in a certain
admissible set $X$ of vector fields on $\Omega \times \R$ (see below for details), such that
\begin{equation}
    F(u) = \int_{\Omega \times \R} \phi \cdot D\mat{1}_u,
\end{equation}
where $\mat{1}_u$ is the characteristic function of the subgraph of $u$ in
$\Omega \times \R$, $\mat{1}_u(x,z)=1$ if $u(x) > z$ and $0$ otherwise, and $D \mat{1}_u$ is its distributional derivative.
The duality between subgraphs and certain vector fields is also the subject of
the broader theory of Cartesian currents \cite{giaquinta1998}.

A convex relaxation of the
original minimization problem then can be formulated in a higher-dimensional space
by considering the functional \cite{chambolle2001,pock2010}
\begin{equation}
    \mathcal{F}(v) := \sup_{\phi \in X}
        \int_{\Omega \times \R} \phi \cdot Dv,
\end{equation}
acting on functions $v$ from the \emph{convex} set
\begin{equation}
    \mathcal{C} = \{ v\colon \Omega \times \R \to [0,1] :
        \lim_{z \to -\infty} v(x,z) = 1,
        \lim_{z \to \infty} v(x,z) = 0
    \}.
\end{equation}
In both formulations, the set of admissible test functions is 
\begin{align}
    X = \{ \phi\colon \Omega \times \R \to \R^{d+1} : ~
        &\phi^t(x,z) \geq f^*(x,z,\phi^x(x,z)) \\
        &\text{for every } (x,z) \in \Omega \times \R
    \},
\end{align}
where $f^*(x,z,p) := \sup_q \langle p, q \rangle - f(x,z,q)$ is the convex
conjugate of $f$ with respect to the last variable.
In fact, the equality
\begin{equation}
    F(u) = \mathcal{F}(\mat{1}_u)
\end{equation}
has been argued to hold for $u \in W^{1,1}(\Omega)$ under suitable assumptions
on $f$ \cite{pock2010}.
A rigorous proof of the case of $u \in BV(\Omega)$ and $f(x,z,p) = f(z,p)$
($f$ independent of $x$), but not necessarily continuous in $z$, can be
found in the recent work \cite{bouchitte2018}.

In \cite{mollenhoff2017}, it is discussed how the choice of discretization
influences the results of numerical implementations of this approach.
More precisely, motivated by the work \cite{mollenhoff2016} from continuous
multilabeling techniques, the choice of piecewise linear finite elements on
$\Gamma$ was shown to exhibit so-called \emph{sublabel-accuracy,} which
is known to significantly reduce memory requirements.

\para{Vectorial data}
The application of the calibration method to \emph{vectorial} data $\Gamma \subset \R^s$,
$s > 1$, is not straightforward, as the concept of subgraphs, which is central
to the idea, does not translate easily to higher-dimensional range.
While the original sufficient minimization criterion has been successfully
translated \cite{mora2002}, functional lifting approaches have not been based on
this generalization so far.
In \cite{strekalovskiy2012}, this approach is considered to be intractable in terms of
memory and computational performance.

There are functional lifting approaches for vectorial data with first-order
regularization that consider the subgraphs of the components of $u$
\cite{goldluecke2013,strekalovskiy2014}.
It is not clear how to generalize this approach to nonlinear data
$\Gamma \subset \mathcal{M}$, such as a manifold $\mathcal{M}$, where other
functional lifting approaches exist at least for the case of total variation
regularization \cite{lellmann2013}.

An approach along the lines of \cite{mollenhoff2016} for vectorial data with
total variation regularization was proposed in \cite{laude2016}.
Even though \cite{mollenhoff2017} demonstrated how \cite{mollenhoff2016} can be
interpreted as a discretized version of the calibration-based lifting, the
equivalent approach \cite{laude2016} for vectorial data lacks a fully-continuous
formulation as well as a generalization to arbitrary integrands that would
demonstrate the exact connection to the calibration method.

\para{Higher-order regularization}
Another limitation of the calibration method is its limitation to first-order derivatives of $u$, which leaves out
higher-order regularizers such as the Laplacian-based curvature regularizer in image registration \cite{fischer2003}.
Recently, a functional lifting approach has been successfully applied to
second-order regularized image registration problems \cite{loewenhauser2018},
but the approach was limited to a single regularizer, namely the integral
over the $1$-norm of the Laplacian (\emph{absolute Laplacian regularization}).

\para{Projection of lifted solutions} In the scalar-valued case with first-order
regularization, the calibration-based lifting is known to generate minimizers
that can be \emph{projected} to
minimizers of the original problem by thresholding \cite[Theorem 3.1]{pock2010}.
This method is also used for vectorial data with component-wise lifting as
in \cite{strekalovskiy2014}.
In the continuous multi-labeling approaches
\cite{lellmann2013,mollenhoff2016,laude2016}, simple averaging is demonstrated
to produce useful results even though no theoretical proof is given addressing
the accuracy in general.
In convex LP relaxation methods, projection (or \emph{rounding}) strategies with provable optimality bounds exist \cite{kleinberg2002} and can be extended to the continuous setting~\cite{Lellmann2012}.
We demonstrate that rounding is non-trivial in our case, but will leave a thorough investigation to future work.

\para{Contribution} In Section~\ref{sec:theory}, we propose a calibration method-like functional
lifting approach in the fully-continuous vector-valued setting for functionals
that depend in a convex way on $\Delta u$.
We show that the lifted functional satisfies $\mathcal{F}(\delta_u) \leq F(u)$,
where $\delta_u$ is the lifted version of a function $u$ and discuss the question
of whether the inequality is actually an equality.
For the case of absolute Laplacian regularization, we show that our model is
a generalization of \cite{loewenhauser2018}.
In Section~\ref{sec:theory-numerics}, we clarify how convex saddle-point solvers can be applied to our discretized model.
Section~\ref{sec:results} is concerned with experimental results.
We discuss the problem of projection and demonstrate that the model
can be applied to image registration problems.

\section{A calibration method with vectorial second-order terms}%
\label{sec:theory}

\subsection{Continuous formulation}%
\label{sec:theory-continuous}

We propose the following lifted substitute for $F$:
\begin{equation}\label{eq:F-lifted}
    \mathcal{F}(\mat{u}) := \sup_{(p,q) \in X}
        \int_{\Omega}\int_{\Gamma}
        (\Delta_x p(x,z) + q(x,z)) \,d\mat{u}_x(z) dx,
\end{equation}
acting on functions $\mat{u}\colon \Omega \to \IP(\Gamma)$ with values in the
space $\IP(\Gamma)$ of Borel probability measures on $\Gamma$.
This means that, for each $x \in \Omega$ and any measurable set
$U \subset \Gamma$, the expression $\mat{u}_x(U) \in \R$ can be interpreted as
the ``confidence'' of an assumed underlying function on $\Omega$ to take a value
inside of $U$ at point~$x$.
A function $u\colon \Omega \to \Gamma$ can be \emph{lifted} to a function
$\mat{u}\colon \Omega \to \IP(\Gamma)$ by defining $\mat{u}_x := \delta_{u(x)}$,
the Dirac mass at $u(x) \in \Gamma$, for each $x \in \Omega$.

We propose the following set of test functions in the definition of $\mathcal{F}$:
\begin{align}
    X = \{ (p,q): ~&
        p \in C_c^2(\Omega \times \Gamma), q \in L^1(\Omega \times \Gamma),\\
        \label{eq:concavity}
        &z \mapsto p(x,z) \text{ concave } \\
        \label{eq:fstar-ineq}
        &\text{and } q(x,z) + f^*(x,z,\nabla_z p(x,z)) \leq 0 \\
        &\text{for every } (x,z) \in \Omega \times \Gamma
    \},
\end{align}
where $f^*(x,z,q) := \sup_{p \in \R^s} \langle q,p \rangle - f(x,z,p)$ is the
convex conjugate of $f$ with respect to the last argument.

A thorough analysis of $\mathcal{F}$ requires a careful choice of function
spaces in the definition of $X$ as well as a precise definition of the
properties of the integrand $f$ and the admissible functions
$\mat{u}\colon \Omega \to \IP(\Gamma)$, which we leave to future work. Here, we present a proof that the lifted functional $\mathcal{F}$ bounds the original functional $F$ from below.

\begin{proposition}
    Let $f\colon \Omega \times \Gamma \times \R^s \to \R$ be measurable in the
    first two, and convex in the third entry, and let $u \in C^2(\Omega;\Gamma)$
    be given.
    Then, for $\mat{u}\colon \Omega \to \IP(\Gamma)$ defined by 
    $\mat{u}_x := \delta_{u(x)}$, it holds that
    \begin{equation}\label{eq:F-Flift-inequality}
        F(u) \geq \mathcal{F}(\mat{u}).
    \end{equation}
\end{proposition}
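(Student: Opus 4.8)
The plan is to fix an arbitrary admissible pair $(p,q)\in X$, bound the corresponding integral in \eqref{eq:F-lifted} from above by $F(u)$ uniformly in $(p,q)$, and then pass to the supremum. Since $\mat{u}_x=\delta_{u(x)}$, the inner integral over $\Gamma$ reduces to evaluation at $z=u(x)$, so the first step is to rewrite \eqref{eq:F-Flift-inequality} as the claim that
\[
    \int_\Omega\bigl(\Delta_x p(x,u(x))+q(x,u(x))\bigr)\,dx\;\le\;F(u)
\]
for every $(p,q)\in X$. All quantities here are well defined because $u\in C^2(\Omega;\Gamma)$, $p\in C_c^2(\Omega\times\Gamma)$ and $q\in L^1(\Omega\times\Gamma)$; importantly, the compact support of $p$ in $\Omega\times\Gamma$ forces $x\mapsto p(x,u(x))$ and $x\mapsto\nabla_z p(x,u(x))$ to be compactly supported in $\Omega$, even though $u$ itself is not required to vanish near $\partial\Omega$. (If $F(u)=+\infty$ the inequality is trivial, and I would treat general integrability questions only to the extent the authors do.)

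The heart of the argument is an integration-by-parts identity that moves the $x$-Laplacian off $p$. Expanding $\Delta\bigl(p(x,u(x))\bigr)$ by the chain rule produces $\Delta_x p(x,u(x))$, the pairing $\langle\nabla_z p(x,u(x)),\Delta u(x)\rangle$, a mixed term $2\sum_i\langle(\partial_{x_i}\nabla_z p)(x,u(x)),\partial_{x_i}u(x)\rangle$, and a Hessian term $\sum_i(\partial_{x_i}u(x))^{\top}\nabla_z^2 p(x,u(x))\,(\partial_{x_i}u(x))$. Integrating this identity over $\Omega$ annihilates the left-hand side, while integrating $\int_\Omega\langle\nabla_z p(x,u(x)),\Delta u(x)\rangle\,dx$ by parts re-expresses the mixed term through $-\langle\nabla_z p,\Delta u\rangle$ and the same Hessian term; eliminating the mixed term between the two relations should give
\[
    \int_\Omega\Delta_x p(x,u(x))\,dx=\int_\Omega\langle\nabla_z p(x,u(x)),\Delta u(x)\rangle\,dx+\int_\Omega\sum_i(\partial_{x_i}u(x))^{\top}\nabla_z^2 p(x,u(x))\,(\partial_{x_i}u(x))\,dx .
\]
By the concavity constraint \eqref{eq:concavity}, $\nabla_z^2 p(x,\cdot)$ is negative semidefinite, so the last integrand is pointwise nonpositive and therefore $\int_\Omega\Delta_x p(x,u(x))\,dx\le\int_\Omega\langle\nabla_z p(x,u(x)),\Delta u(x)\rangle\,dx$.

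To handle the $q$-term I would invoke the admissibility constraint \eqref{eq:fstar-ineq}, which gives $q(x,u(x))\le-f^*(x,u(x),\nabla_z p(x,u(x)))$, and combine it with the Fenchel--Young inequality $f^*(x,z,\xi)\ge\langle\xi,w\rangle-f(x,z,w)$ evaluated at $w=\Delta u(x)$; this yields the pointwise estimate $q(x,u(x))\le f(x,u(x),\Delta u(x))-\langle\nabla_z p(x,u(x)),\Delta u(x)\rangle$. Adding this to the bound from the previous paragraph, the two copies of $\int_\Omega\langle\nabla_z p(x,u(x)),\Delta u(x)\rangle\,dx$ cancel, leaving precisely $\int_\Omega f(x,u(x),\Delta u(x))\,dx=F(u)$; taking the supremum over $(p,q)\in X$ then gives \eqref{eq:F-Flift-inequality}. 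Note that convexity of $f$ in its last argument is not actually used in this direction — only the defining inequality of the conjugate is.

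I expect the main obstacle to be technical rather than conceptual: carefully tracking the signs and the factors of $2$ in the chain-rule expansion, and above all justifying each integration by parts, i.e. confirming that all boundary terms vanish thanks to the compact $x$-support of $p$ and not through any behaviour of $u$ on $\partial\Omega$. The one genuinely structural point is that the concavity of $p$ in $z$ is exactly what guarantees the correct sign of the leftover Hessian term, so this is where the particular choice of the test-function set $X$ enters in an essential way.
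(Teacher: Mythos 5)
Your proposal is correct and follows essentially the same route as the paper: the chain-rule expansion of $\Delta\bigl[p(x,u(x))\bigr]$, the divergence-theorem identity with boundary terms vanishing by the compact support of $p$, the sign argument from concavity of $z\mapsto p(x,z)$, and the combination of the constraint \eqref{eq:fstar-ineq} with the Fenchel--Young inequality at $w=\Delta u(x)$. Your closing remark that convexity of $f$ in the last argument is not needed for this inequality (only for questions of equality) is also accurate.
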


\begin{proof}
    Let $p,q$ be any pair of functions satisfying the properties from the
    definition of $X$.
    By the chain rule, we compute 
    \begin{align}\label{eq:delta-p-chain-rule}
        \Delta_x p(x,u(x))
        &= \Delta\left[p(x,u(x))\right]
            - \sum_{i=1}^d 
                \langle
                    \partial_i u(x),
                    D^2_z p(x,u(x)) \partial_i u(x)
                \rangle \\
        &\phantom{=} - 2\langle \nabla_x \nabla_z p(x,u(x)), \nabla u(x) \rangle
            - \langle \nabla_z p(x,u(x)), \Delta u(x) \rangle.\nonumber
    \end{align}
    Furthermore, the divergence theorem ensures
    \begin{align}\label{eq:dxdz-gauss}
        -\int_\Omega \langle \nabla_x \nabla_z p(x,u(x)), \nabla u(x) \rangle dx
        &= \int_\Omega \langle \nabla_z p(x,u(x)), \Delta u(x) \rangle dx \\
        &\phantom{=} + \int_\Omega\sum_{i=1}^d
                \langle \partial_i u(x), D^2_z p(x,u(x)) \partial_i u(x) \rangle
        dx,\nonumber
    \end{align}
    as well as $\int_\Omega \Delta\left[p(x,u(x))\right] dx = 0$
%
    by the compact support of $p$. As $p \in C_c^2(\Omega \times \Gamma)$, concavity of
    $z \mapsto p(x,z)$ implies a negative semi-definite Hessian
    $D^2_z p(x,z)$, so that, together with
    \eqref{eq:delta-p-chain-rule}--\eqref{eq:dxdz-gauss},
    \begin{align}\label{eq:lapl-est}
        \int_{\Omega} \Delta_x p(x,u(x)) \,dx
        \leq \int_{\Omega} \langle \nabla_z p(x,u(x)), \Delta u(x) \rangle \,dx.
    \end{align}   
We conclude 
    \begin{align}
        \mathcal{F}(\mat{u})
        &=\int_{\Omega}\int_{\Gamma}
            (\Delta_x p(x,z) + q(x,z)) \,d\mat{u}_x(z) dx \\
        &= \int_{\Omega} \Delta_x p(x,u(x)) + q(x,u(x)) \,dx \\
        &\overset{\eqref{eq:fstar-ineq}}{\leq} \int_{\Omega} \Delta_x p(x,u(x)) - f^*(x,u(x),\nabla_z p(x, u(x))) \,dx \\
        &\overset{\eqref{eq:lapl-est}}{\leq} \int_{\Omega}
            \langle \nabla_z p(x,u(x)), \Delta u(x) \rangle
            - f^*(x,u(x),\nabla_z p(x, u(x))) \,dx \\
        &\leq \int_{\Omega} f(x,u(x), \Delta u(x)) \,dx,
    \end{align}
    where we used the definition of $f^*$ in the last inequality.
    \qed
\end{proof}

By a standard result from convex analysis,
$
    \langle p,g \rangle - f^*(x,z,g) = f(x,z,p)
$
whenever $g \in \partial_p f(x,z,p)$, the subdifferential of $f$ with
respect to $p$.
Hence, for equality to hold in \eqref{eq:F-Flift-inequality}, we would need to find a
function $p \in C_c^2(\Omega \times \Gamma)$ with
\begin{equation}\label{eq:optimal-p}
    \nabla_z p(x,u(x)) \in \partial_p f(x,u(x), \Delta u(x))
\end{equation}
and associated $q(x,z) := -f^*(x,z,\Delta u(x))$, such that $(p,q) \in X$ or $(p,q)$ can be approximated by functions from $X$.

\para{Separate data term and regularizer} %
If the integrand can be decomposed into $f(x,z,p) = \rho(x,z) + \eta(p)$ as in \eqref{eq:f-decomp}, with 
$\eta \in C^1(\R^s)$ and $u$ sufficiently smooth, the optimal pair
$(p,q)$ in the sense of \eqref{eq:optimal-p} can be explicitly given as
\begin{align}
    p(x,z) &:= \langle z, \nabla \eta(\Delta u(x)) \rangle, \\
    q(x,z) &:= \rho(x,z) - \eta^*(\nabla \eta(\Delta u(x))).
\end{align}
A rigorous argument that such $p,q$ exist for any given $u$ could be made by approximating them by compactly supported functions from the admissible set~$X$ using suitable cut-off functions on $\Omega \times \Gamma$.

\subsection{Connection to the discretization-first approach \cite{loewenhauser2018}}%
\label{sec:theory-discrete}

In \cite{loewenhauser2018}, data term $\rho$ and regularizer $\eta$ are lifted
independently from each other for the case $\eta = \| \cdot \|_1$.
Following the continuous multilabeling approaches in
\cite{chambolle2012,mollenhoff2016,laude2016}, the setting is fully discretized
in $\Omega \times \Gamma$ in a first step.
Then the lifted data term and regularizer are defined to be the convex hull
of a constraint function, which enforces the lifted terms to agree on the Dirac measures $\delta_u$ with the
original functional applied to the corresponding function $u$.
The data term is taken from \cite{laude2016}, while the main contribution concerns the regularizer that now depends on the
Laplacian of $u$.

In this section, we show that our fully-continuous lifting is a generalization
of the result from \cite{loewenhauser2018} after discretization.

\para{Discretization}
In order to formulate the discretization-first lifting approach given in
\cite{loewenhauser2018}, we have to clarify the used discretization.

For the image domain $\Omega \subset \R^d$, discretized using points
$X^1, \dots, X^N \in \Omega$ on a rectangular grid, we employ a finite-differences scheme:
We assume that, on each grid point $X^{i_0}$, the discrete Laplacian of
$u \in \R^{N,s}$, $u^{i} \approx u(X^i) \in \R^s$, is defined using the
values of $u$ on $m+1$ grid points $X^{i_0}, \dots, X^{i_m}$ such that
\begin{equation}\textstyle
    (\Delta u)^{i_0} = \sum_{l=1}^m (u^{i_l} - u^{i_0}) \in \R^s.
\end{equation}
For example, in the case $d = 2$, the popular five-point stencil means $m = 4$
and the $X^{i_l}$ are the neighboring points of $X^{i_0}$ in the rectangular
grid.
More precisely,
\begin{align}\textstyle
    \sum_{l=1}^4 (u^{i_l} - u^{i_0})
    &= [u^{i_1} - 2u^{i_0} + u^{i_2}] + [u^{i_3} - 2u^{i_0} + u^{i_4}].
\end{align}
The range $\Gamma \subset \R^s$ is triangulated into simplices
$\Delta_1,\dots,\Delta_M$ with altogether $L$ vertices (or \emph{labels})
$Z^1, \dots, Z^L \in \Gamma$.
We write
    $T := (Z^1|\dots|Z^L)^T \in \R^{L,s},$
and define the sparse indexing matrices $P^j \in \R^{s+1,L}$ in 
such a way that the rows of
$T_j := P^j T \in \R^{s+1,s}$ are the labels that make up $\Delta_j$.

There exist piecewise linear finite elements
$\Phi_k\colon \Gamma \to \R$, $k = 1,\dots,L$ satisfying $\Phi_k(t_l)=1$ if
$k=l$, and $\Phi_k(t_l)=0$ otherwise.
In particular, the $\Phi_k$ form a partition of unity for $\Gamma$, i.e., $\sum_k \Phi_k(z) = 1 \text{ for any } z \in \Gamma$.
For a function $p\colon \Gamma \to \R$ in the function space spanned by the
$\Phi_k$, with a slight abuse of notation, we write $p = (p_1,\dots,p_L)$, where
$p_k = p(Z^k)$ so that
$
    p(z) = \sum_k p_k \Phi_k(z).
$

\para{Functional lifting of the discretized absolute Laplacian}
Along the lines of classical continuous multilabeling approaches, the absolute
Laplacian regularizer is lifted to become the convex hull of the constraint function
$\phi: \R^L \to \R \cup \{+\infty\}$,
\begin{equation}\label{eq:discrete-phi-function}
    \phi(p) := \begin{cases}
        \mu \left\| \sum_{l=1}^m (T_{j_l} \alpha^l - T_{j_0} \alpha^0) \right\|,
            & \text{if } p = \mu \sum_{l=1}^m (P^{j_l} \alpha^l - P^{j_l} \alpha^0), \\
        +\infty, & \text{otherwise,}
    \end{cases}
\end{equation}
where $\mu \geq 0$, $\alpha^l \in \Delta^U_{s+1}$ (for $\Delta^U_{s+1}$ the unit
simplex) and $1 \leq j_l \leq M$ for each $l = 0,\dots,m$.
The parameter $\mu \geq 0$ is enforcing positive homogeneity of $\phi$ which
makes sure that the convex conjugate $\phi^*$ of $\phi$ is given by the
characteristic function $\delta_\mathcal{K}$ of a set
$\mathcal{K} \subset \R^L$.
Namely,
\begin{align}\label{eq:Kset}\textstyle
    \mathcal{K} = \bigcap_{1 \leq j_l \leq M} \{
        f \in \R^{L}:
        &\textstyle
        \sum_{l=1}^m (f(t^l) - f(t^0))
            \leq \left\| \sum_{l=1}^m (t^l - t^0) \right\|, \\
        &\textstyle
        \text{for any } \alpha^l \in \Delta^U_{s+1}, l=0,1,\dots,m
    \},
\end{align}
where $t^l := T_{j_l} \alpha^l$ and $f(t^l)$ is the evaluation of the piecewise
linear function $f$ defined by the coefficients $(f_1,\dots,f_L)$ (cf. above).
The formulation of $\mathcal{K}$ comes with infinitely many constraints so far.

We now show two propositions which give a meaning to this set of constraints for arbitrary dimensions $s$ of the labeling space and an arbitrary choice of norm in the definition of $\eta = \|\cdot\|$. They extend the component-wise (anisotropic) absolute Laplacian result in \cite{loewenhauser2018} to the vector-valued case.

\begin{proposition}\label{prop:K-concave-lipschitz}
    The set $\mathcal{K}$ can be written as
    \[
        \mathcal{K} = \left\{ f \in \R^{L}:
            f\colon \Gamma \to \R \text{ is concave and 1-Lipschitz continuous}
        \right\}.
    \]
\end{proposition}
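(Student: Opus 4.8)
The plan is to first strip the definition of $\mathcal{K}$ down to a single transparent family of inequalities and then identify it with concavity and $1$-Lipschitz continuity. Since $t^l = T_{j_l}\alpha^l$ runs through all of the simplex $\Delta_{j_l}$ as $\alpha^l$ runs through $\Delta^U_{s+1}$, and hence through all of $\Gamma$ once $j_l$ is allowed to vary as well, the intersection over all stencils in~\eqref{eq:Kset} collapses to
\[
    \textstyle\sum_{l=1}^m f(t^l) - m\,f(t^0) \leq \bigl\|\sum_{l=1}^m(t^l - t^0)\bigr\|
    \qquad\text{for all } t^0,t^1,\dots,t^m \in \Gamma,
\]
where $f$ is shorthand for the piecewise linear interpolant determined by the coefficients $(f_1,\dots,f_L)$ (which is automatically continuous for a conforming triangulation), and $\Gamma$ is convex. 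It then remains to show that an $f$ satisfies this family of inequalities if and only if it is concave and $1$-Lipschitz on $\Gamma$.

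For the ``if'' direction, suppose $f$ is concave and $1$-Lipschitz. With $\bar t := \tfrac1m\sum_{l=1}^m t^l \in \Gamma$, Jensen's inequality gives $\sum_{l=1}^m f(t^l) \leq m f(\bar t)$, while $1$-Lipschitz continuity and positive homogeneity of the norm give $m\bigl(f(\bar t) - f(t^0)\bigr) \leq m\|\bar t - t^0\| = \bigl\|\sum_{l=1}^m(t^l - t^0)\bigr\|$; adding the two yields the asserted inequality. This uses nothing beyond the defining properties of concave Lipschitz functions and works for any norm on $\R^s$ and any dimension $s$, so the right-hand set is contained in $\mathcal{K}$.

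For the converse, first take $t^1 = \dots = t^m = a$ and $t^0 = b$: the inequality reduces to $f(a) - f(b) \leq \|a - b\|$, and swapping $a$ and $b$ gives $1$-Lipschitz continuity, hence continuity of $f$. For concavity, the key observation is that the ``center'' point $t^0$ may be placed anywhere, and in particular so as to make the right-hand side vanish. Given $a, b \in \Gamma$, set $c := \tfrac12(a+b)$ and choose $t^0 = c$, one of the $t^l$ equal to $a$, one equal to $b$, and the remaining $m-2$ equal to $c$ (this uses $m \geq 2$, which holds for any genuine discrete Laplacian; for $m=1$ the stencil is a first-order difference and lies outside the scope). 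Then the barycenter $\tfrac1m\bigl(a + b + (m-2)c\bigr)$ equals $c = t^0$, so the right-hand side is $0$ and the inequality reads $f(a) + f(b) - 2f(c) \leq 0$, i.e.\ midpoint concavity. Since $f$ is continuous, the classical fact that a continuous midpoint-concave function is concave (dyadic convex combinations are obtained by iterating the midpoint inequality, and density together with continuity extends this to all convex combinations) completes the argument; alternatively, one can iterate the $m$-point inequality to an $m^k$-point version and pass directly to the limit $m^k \to \infty$.

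The only genuinely non-routine step is the last one, the passage from midpoint to full concavity; everything else is a direct substitution into the single inequality displayed above. It is worth recording explicitly that the argument is insensitive to the choice of norm and to the label-space dimension $s$, so it indeed extends the component-wise result of~\cite{loewenhauser2018}, and that the only standing assumptions on the discretization are $m \geq 2$, convexity of $\Gamma$, and a conforming triangulation.
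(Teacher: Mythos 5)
Your proposal is correct and follows essentially the same route as the paper: the ``if'' direction via Jensen's inequality at the barycenter plus $1$-Lipschitz continuity, and the converse by specializing the stencil to a constant choice (giving the Lipschitz bound) and to the midpoint configuration $t^0=\tfrac12(a+b)$ with the remaining points at $t^0$ so that the right-hand side vanishes (giving midpoint concavity), upgraded to full concavity by continuity. Your explicit remarks on $m\geq 2$ and convexity of $\Gamma$ merely make precise assumptions the paper uses implicitly.
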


\begin{proof}
    If the piecewise linear function induced by $f \in \R^L$ is concave and
    1-Lipschitz continuous, then
    \begin{align}
        \frac{1}{m} \sum_{l=1}^m (f(t^l) - f(t^0)) 
        &= \left( \frac{1}{m} \sum_{l=1}^m f(t^l) \right) - f(t^0)
        \leq f\left(\frac{1}{m} \sum_{l=1}^m t^l\right) - f(t^0) \\
        &\leq \left\| \left( \frac{1}{m} \sum_{l=1}^m t^l \right) - t^0 \right\|
        = \frac{1}{m} \left\| \sum_{l=1}^m (t^l - t^0) \right\|.
    \end{align}
    Hence, $f \in \mathcal{K}$.
    On the other hand, if $f \in \mathcal{K}$, then we recover Lipschitz
    continuity by choosing $t^l = t^1$, for any $l$ in \eqref{eq:Kset}.
    For concavity, we first prove mid-point concavity. That is, for any
    $t^1, t^2 \in \Gamma$, we have
    \begin{equation}\textstyle
        \frac{f(t^1) + f(t^2)}{2} \leq f\left(\frac{t^1 + t^2}{2}\right)
    \end{equation}
    or, equivalently, $[f(t^1) - f(t^0)] + [f(t^2) - f(t^0)] \leq 0$,
    where $t^0 = \frac{1}{2}(t^1 + t^2)$.
    This follows from \eqref{eq:Kset} by choosing $t^0 = \frac{1}{2}(t^1 + t^2)$
    and $t^l = t^0$ for $l > 2$.
    With this choice, the right-hand side of the inequality in \eqref{eq:Kset}
    vanishes and the left-hand side reduces to the desired statement.
    Now, $f$ is continuous by definition and, for these functions, mid-point
    concavity is equivalent to concavity.
    \qed
\end{proof}

The following theorem is an extension of \cite[Theorem 1]{loewenhauser2018} to the vector-valued case and is crucial for numerical performance, as it shows that the constraints in Prop. \ref{prop:K-concave-lipschitz} can be reduced to a finite number:

\begin{proposition}\label{prop:K-finite-constraints}
    The set $\mathcal{K}$ can be expressed using not more than $|\mathcal{E}|$
    (nonlinear) constraints, where $\mathcal{E}$ is the set of faces (or edges
    in the 2D-case) in the triangulation.
\end{proposition}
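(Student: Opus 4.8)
The plan is to show that the infinitely many constraints defining $\mathcal{K}$ in \eqref{eq:Kset} — equivalently, by Prop.~\ref{prop:K-concave-lipschitz}, concavity and $1$-Lipschitz continuity of the piecewise linear function $f$ — can each be decided by looking only at the behavior of $f$ across single faces of the triangulation. The key structural observation is that a piecewise linear function on a triangulated domain $\Gamma$ is concave if and only if it is concave across every interior face $\mathcal{F} \in \mathcal{E}$, i.e.\ for each face shared by two simplices $\Delta_j$, $\Delta_{j'}$, the affine extension of $f|_{\Delta_j}$ overestimates $f$ on $\Delta_{j'}$ (and vice versa). This is a standard fact about piecewise linear functions: local ``kink in the right direction'' at every face implies global concavity, because any line segment in $\Gamma$ crosses finitely many faces, and concavity along the segment follows by composing the one-dimensional concavity conditions obtained at each crossing. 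Since $f$ is affine on each simplex, the face condition is a single linear inequality in the coefficients $(f_1,\dots,f_L)$: it compares the value at the vertex of $\Delta_{j'}$ opposite the face $\mathcal{F}$ with the affine interpolant determined by the vertices of $\mathcal{F}$.

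For the $1$-Lipschitz condition, I would argue that since $f$ is piecewise affine, $\|\nabla f\|_* \le 1$ on each simplex is equivalent to $f$ being $1$-Lipschitz on that simplex (with respect to the norm $\|\cdot\|$ appearing in $\eta$, whose dual norm controls the gradient), and hence — using that the simplices cover $\Gamma$ and $f$ is continuous — to $f$ being globally $1$-Lipschitz. The gradient of $f$ restricted to $\Delta_j$ is determined by the differences of the $f_{k}$ over the vertices of $\Delta_j$, which again are spanned by differences along the edges of $\Delta_j$; so each per-simplex Lipschitz constraint is a (nonlinear, since it involves the dual norm) constraint attached to that simplex, and these can be folded into the face count, or more precisely bounded by $|\mathcal{E}|$ since every simplex contributes a face. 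I would then combine: $f \in \mathcal{K}$ iff $f$ satisfies, for every face, the linear concavity inequality, and, for every simplex, the norm-bound on the (face-difference-determined) gradient — altogether at most $|\mathcal{E}|$ constraints.

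The steps, in order: (i) recall/state the piecewise-linear concavity characterization via faces, with a short one-dimensional slicing argument; (ii) translate the face concavity condition into an explicit linear inequality in the $f_k$ using barycentric coordinates of the opposite vertex with respect to the face; (iii) handle the Lipschitz part by reducing global Lipschitz continuity of a continuous piecewise-affine function to a per-simplex dual-norm bound on the gradient, and rewrite that bound in terms of edge differences; (iv) count: associate each constraint with a face (interior faces carry the concavity inequalities; every simplex is incident to faces, so the simplex-wise Lipschitz bounds are absorbed into $|\mathcal{E}|$), concluding the stated bound.

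The main obstacle I expect is step (iii) together with the bookkeeping in step (iv): making the reduction from ``$f$ is $1$-Lipschitz on $\Gamma$'' to finitely many constraints genuinely clean requires care, because Lipschitz continuity of a piecewise-affine function is a statement about pairs of points in possibly different simplices, and one must verify that controlling each affine piece's slope suffices — which is true for a \emph{continuous} piecewise-affine function on a connected triangulated domain, but deserves an explicit argument (e.g.\ integrating the gradient along a path and using continuity across faces). A secondary subtlety is ensuring the face-concavity inequalities are well posed for boundary faces and that the triangulation is such that ``concave across every interior face'' plus the definition of $f$ on $\Gamma$ indeed recovers global concavity; I would address this by the slicing argument and note that boundary faces impose no concavity constraint, which is why $|\mathcal{E}|$ (rather than a larger count) suffices.
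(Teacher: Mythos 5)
Your reduction of membership in $\mathcal{K}$ to finitely many constraints is fine as far as it goes (concavity $\Leftrightarrow$ one monotone-gradient inequality per \emph{inner} face, Lipschitz continuity $\Leftrightarrow$ a dual-norm bound on the constant gradient of each simplex), but the counting in your step (iv) does not deliver the claimed bound, and you flagged exactly the right spot. Your tally is (\# inner faces) $+$ (\# simplices), and the attempted absorption ``every simplex contributes a face'' would require an injective assignment of simplices to faces \emph{not already used} for concavity, i.e.\ to outer faces; interior simplices have no outer face, so no such assignment exists once the triangulation has interior simplices. Concretely, in 1D with $L$ labels you get $(L-2)+(L-1)=2L-3 > L = |\mathcal{E}|$ constraints for $L\ge 4$; in 2D, with $E_i$ inner and $E_b$ boundary edges, $3F = 2E_i + E_b$ gives a count of $(5E_i+E_b)/3$, which exceeds $|\mathcal{E}| = E_i + E_b$ as soon as $E_i > E_b$, i.e.\ for any reasonably fine triangulation.

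The missing idea — and the actual content of the paper's proof, extending Theorem~1 of \cite{loewenhauser2018} — is that concavity lets you drop the per-simplex Lipschitz constraints everywhere except on the \emph{boundary} simplices: for a concave piecewise linear function the directional derivatives are non-increasing along any line, so the maximal dual norm of the gradient is attained on a simplex touching $\partial\Gamma$ (in 1D: the slopes of a concave function are monotonically decreasing, so bounding the two outermost slopes bounds them all). Hence one needs one concavity constraint per inner face and at most one gradient-norm constraint per outer face, totalling at most $|\mathcal{E}|$. If you add this propagation argument (e.g.\ via your own slicing idea: take the direction $d$ achieving $\|\nabla f|_{\Delta_j}\|_* $ and walk backwards along $-d$ to the boundary, using monotonicity of $\langle \nabla f, d\rangle$ across the crossed faces), your proof closes; without it, the statement you actually prove is only ``finitely many constraints,'' not ``at most $|\mathcal{E}|$ constraints.''
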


\begin{proof}
    Usually, Lipschitz continuity of a piecewise linear function requires one
    constraint on each of the simplices in the triangulation, and thus as many constraints as there are gradients. However, together with concavity, it suffices to enforce a gradient
    constraint on each of the boundary simplices, of which there are fewer than the number
    of outer faces in the triangulation.
    This can be seen by considering the one-dimensional case
    where Lipschitz constraints on the two outermost pieces of a concave
    function enforce Lipschitz continuity on the whole domain.
    Concavity of a function $f\colon \Gamma \to \R$ expressed
    in the basis $(\Phi_k)$ is equivalent to its gradient being monotonously
    decreasing across the common boundary between any neighboring simplices.
    Together, we need one gradient constraint for each inner, and at most one
    for each outer face in the triangulation.
    \qed
\end{proof}

\subsection{Numerical aspects}%
\label{sec:theory-numerics}

For the numerical experiments, we restrict to the special case of integrands
$f(x,z,p) = \rho(x,z) + \eta(p)$ as motivated in Section \ref{sec:theory-continuous}.

\para{Discretization} 
We base our discretization on the setting in Section \ref{sec:theory-discrete}.
For a function $p\colon \Gamma \to \R$ in the function space spanned by the
$\Phi_k$, we note that
\begin{equation}\label{eq:p-bary-repr}\textstyle
    p(z) = \sum_{k=1}^L p_k \Phi_k(z) = \langle A^jz - b^j, P^jp \rangle 
    \text{ whenever } z \in \Delta_j,
\end{equation}
where $A^j$ and $b^j$ are such that $
    \alpha = A^jz - b^j \in \Delta^U_{s+1}
$ contains
the barycentric coordinates of $z$ with respect to $\Delta_j$.
More precisely, for $\bar{T}^j := (P^j T \vert -e)^{-1} \in \R^{s+1,s+1}$ with
$e = (1,\dots,1) \in \R^{s+1}$, we set
\begin{align}
    A^j := \bar{T}^j\texttt{(1:s,:)} \in \R^{s,s+1}, \quad
    b^j := \bar{T}^j\texttt{(s+1,:)} \in \R^{s+1}.
\end{align}
The functions $\mat{u}\colon \Omega \to \IP(\Gamma)$ are discretized as
$
    u^{ik} := \int_{\Gamma} \Phi_k(z) d\mat{u}_{X^i}(z)
$,
hence $u \in \R^{N,L}$.
Furthermore, whenever $\mat{u}_x = \delta_{u(x)}$, the  discretization
$u^{i}$ contains the barycentric coordinates of $u(X^i)$ relative to $\Delta_j$.
In the context of first-order models, this property is described as sublabel-accuracy in \cite{laude2016,mollenhoff2017}.

\para{Dual admissibility constraints} The admissible set $X$ of dual variables is realized by discretizing
the conditions \eqref{eq:concavity} and \eqref{eq:fstar-ineq}.

Concavity \eqref{eq:concavity} of a function $p\colon \Gamma \to \R$ expressed
in the basis $(\Phi_k)$ is equivalent to its gradient being monotonously
decreasing across the common boundary between any neighboring simplices. This amounts to 
\begin{equation}
    \langle g^{j_2} - g^{j_1}, n_{j_1,j_2} \rangle \leq 0,
\end{equation}
where $g^{j_1},g^{j_2}$ are the (piecewise constant) gradients $\nabla p(z)$ on two neighboring simplices $\Delta_{j_1},\Delta_{j_2}$, and $n_{j_1,j_2} \in \R^s$ is the normal of their common
boundary pointing from $\Delta_{j_1}$ to $\Delta_{j_2}$.

The inequality \eqref{eq:fstar-ineq} is discretized using \eqref{eq:p-bary-repr}
similar to the one-dimensional setting presented in \cite{mollenhoff2017}.
We denote the dependence of $p$ and $q$ on $X^i \in \Omega$ by a superscript
$i$ as in $q^i$ and $p^i$.
Then, for any $j = 1,\dots,M$, we require
\begin{equation}
    \sup_{z \in \Delta_j}
        \langle A^j z - b^j, P^jq^i \rangle  - \rho(X^i, z) + \eta^*(g^{ij}) \leq 0
\end{equation}
which, for $\rho_j := \rho + \delta_{\Delta_j}$, can be formulated equivalently as
\begin{equation}
   \rho_j^*(X^i,(A^j)^T P^jq^i) + \eta^*(g^{ij}) \leq \langle b^j, P^jq \rangle.
\end{equation}

The fully discretized problem can be expressed
in convex-concave saddle point form to which we apply the primal-dual hybrid
gradient (PDHG) algorithm \cite{chambolle2011} with adaptive step sizes
from \cite{goldstein2013}.
The epigraph projections for $\rho_j^*$ and $\eta$ are implemented along the
lines of \cite{mollenhoff2016} and \cite{pock2010}.

\section{Numerical results}%
\label{sec:results}
We implemented the proposed model in Python 3 with NumPy and PyCUDA.
The examples were computed on an Intel Core i7 4.00\,GHz with 16\,GB of memory
and an NVIDIA GeForce GTX 1080 Ti with 12\,GB of dedicated video memory.
The iteration was stopped when the Euclidean norms of the primal and
dual residuals \cite{goldstein2013} fell below
$10^{-6} \cdot \sqrt{n}$ where $n$ is the respective number of variables. 

\begin{figure}[t]
    \includegraphics[
        trim=125 464 575 87,clip,width=0.22\textwidth
    ]{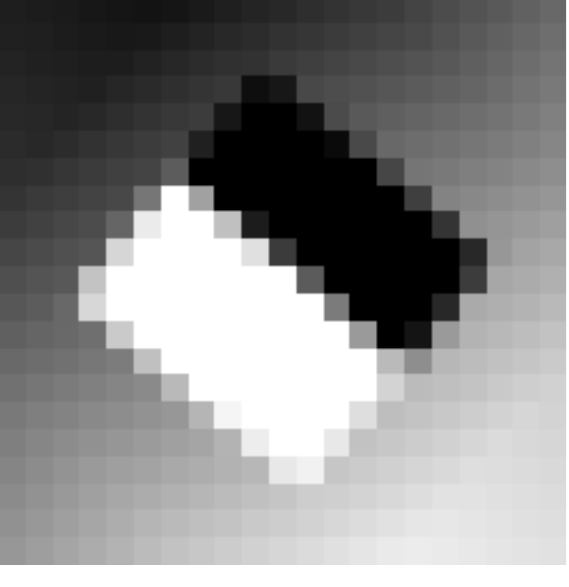}
    \hfill
    \includegraphics[
        trim=360 464 340 87,clip,width=0.22\textwidth
    ]{fig/plot-rotate.pdf}
    \hfill
    \includegraphics[
        trim=598 464 102 87,clip,width=0.22\textwidth
    ]{fig/plot-rotate.pdf}
    \hfill
    \includegraphics[
        trim=580 270 85 278,clip,width=0.272\textwidth
    ]{fig/plot-rotate.pdf}
    \hfill
    
    \vspace{-0.3em}
    \raisebox{0.13\height}{\includegraphics[
        trim=326 257 322 280,clip,width=0.24\textwidth
    ]{fig/plot-rotate.pdf}}
    \hfill
    \includegraphics[
        trim=605 80 115 478,clip,width=0.24\textwidth
    ]{fig/plot-rotate.pdf}%
    \hspace{-0.3em}\raisebox{0.13\textwidth}{$\mapsto\,$}%
    \includegraphics[
        trim=135 80 585 478,clip,width=0.24\textwidth
    ]{fig/plot-rotate.pdf}
    \hfill
    \includegraphics[
        trim=125 250 580 279,clip,width=0.20\textwidth
    ]{fig/plot-rotate.pdf}
    
    \vspace{-1.1em}
    \caption{
        Application of the proposed higher-oder lifting to image registration with
        SSD data term and squared Laplacian regularization.
        The method accurately finds a deformation (bottom row, middle and right)
        that maps the template image (top row, second from left) to the
        reference image (top row, left), as also visible from the difference
        image (top row, right).
        The result (top row, second from right) is almost pixel-accurate,
        although the range $\Gamma$ of possible deformation vectors at each
        point is discretized using only $25$ points (second row, left).
    }\label{fig:result-rotate}
    \vspace{-1.5em}
\end{figure}%
\begin{figure}
    \centering
    \hfill
    \includegraphics[
        trim=161 471 612 87,clip,width=0.145\textwidth
    ]{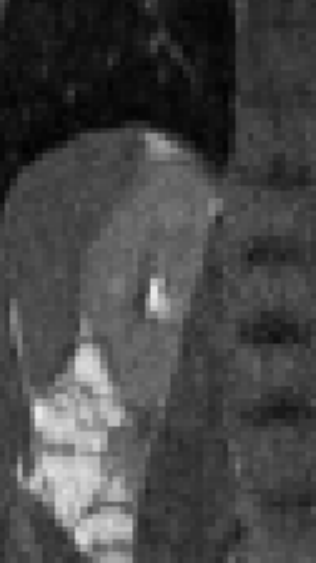}%
    \hfill
    \includegraphics[
        trim=397 471 376 87,clip,width=0.145\textwidth
    ]{fig/plot-dce-mri.pdf}%
    \hfill
    \includegraphics[
        trim=633 471 140 87,clip,width=0.145\textwidth
    ]{fig/plot-dce-mri.pdf}%
    \hfill
    \includegraphics[
        trim=166 86 617 471,clip,width=0.147\textwidth
    ]{fig/plot-dce-mri.pdf}%
    \hfill
    \includegraphics[
        trim=647 79 125 471,clip,width=0.145\textwidth
    ]{fig/plot-dce-mri.pdf}%
    \hfill
    \raisebox{-.025\height}{%
        \includegraphics[
            trim=647 270 85 277,clip,width=0.208\textwidth
        ]{fig/plot-dce-mri.pdf}
    }
    \vspace{-0.8em}
    \caption{
        DCE-MRI data of a human kidney; data courtesy of Jarle Rørvik, Haukeland
        University Hospital Bergen, Norway; taken from \cite{brehmer2018}.
        The deformation (from the left: third and fourth picture) mapping the
        template (second) to the reference (first) image, computed using our
        proposed model, is able to significantly reduce the misfit in the left
        half while fixing the spinal cord at the right edge as can be observed
        in the difference images from before (fifth) and after (last)
        registration.
    }\label{fig:result-dce-mri}
\end{figure}

\para{Image registration} %
We show that the proposed model can be applied to two-dimensional image
registration problems (Figure~\ref{fig:result-rotate} and \ref{fig:result-dce-mri}).
We used the sum of squared
distances (SSD) data term $\rho(x,z) := \frac{1}{2}\|R(x) - T(x+z)\|_2^2$ and squared Laplacian (curvature) regularization $\eta(p) := \frac{1}{2}\|\cdot\|^2$. The image values $T(x + z)$ were calculated using bilinear interpolation with Neumann boundary conditions.
After minimizing the lifted functional, we projected the solution by taking
averages over $\Gamma$ in each image pixel.

In the first experiment (Figure~\ref{fig:result-rotate}), the reference image
$R$ was synthesized by numerically rotating the template $T$ by $40$ degrees.
The grid plot of the computed deformation as well as the deformed template are visually
very close to the rigid ground-truth deformation (a rotation by 40 degrees).
Note that the method obtains almost pixel-accurate results although the range $\Gamma$ of the deformation is discretized on a disk around the origin, triangulated using only $25$ vertices, which is far less than the image resolution.

The second experiment (Figure~\ref{fig:result-dce-mri}) consists of two coronal
slices from a DCE-MRI dataset of a human kidney (data courtesy of Jarle Rørvik,
Haukeland University Hospital Bergen, Norway; taken from \cite{brehmer2018}).
The deformation computed using our proposed model is able to significantly
reduce the misfit in liver and kidney in the left half while accurately fixing
the spinal cord at the right edge.

\para{Projecting the lifted solution} \label{sec:results-proj}%
In the scalar-valued case with first-order regularization, the minimizers of the
calibration-based lifting can be projected to minimizers of the original
problem \cite[Theorem 3.1]{pock2010}.
In our notation, the thresholding technique used there corresponds to mapping
$\mat{u}$ to
\begin{equation}\label{eq:thresholding}
    u(x) := \inf\{ t : \mat{u}_x((-\infty,t] \cap \Gamma) > s \},
\end{equation}
which is (provably) a global minimizer of the original problem
for any $s \in [0,1)$. 

\begin{figure}[t]
    \centering
    \hfill
    \includegraphics[
        trim=613 35.5 87 191,clip,width=0.29\textwidth
    ]{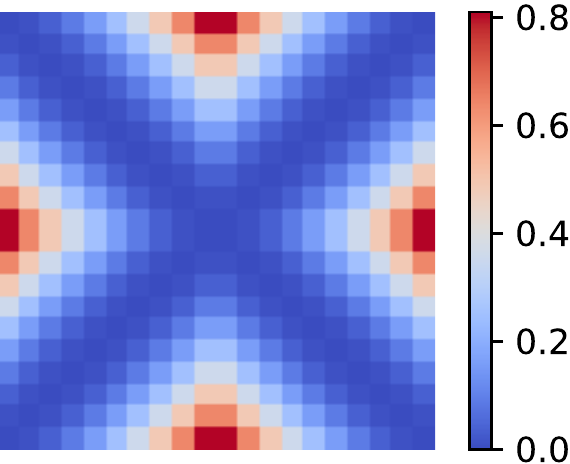}
    \hfill
    \includegraphics[
        trim=376 35.5 323 191,clip,width=0.29\textwidth
    ]{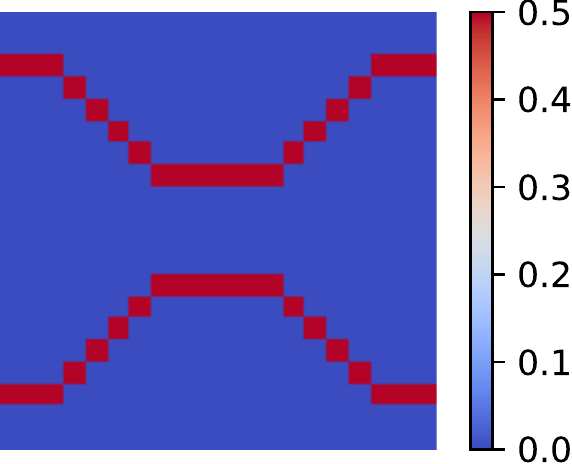}
    \hfill
    \includegraphics[
        trim=376 35.5 323 191,clip,width=0.29\textwidth
    ]{fig/plot-second-order.pdf}
    \hfill
    \vspace{-0.9em}
    \caption{Minimizers of the lifted functional for the non-convex data term $\rho(x,z) = (|x| - |z|)^2$ (left). With classical first-order total variation-regularized lifting (middle), the result is a composition of two solutions, which can be easily discriminated using thresholding. For the new second-order squared-Laplacian regularized lifting (right), this simple approach fails to separate the two possible (straight line) solutions.
    }\label{fig:result-proj}
    \vspace{-1.5em}
\end{figure}%

To investigate whether a similar property can hold in our higher-order case, we applied our model with Laplacian regularization $\eta(p) = \frac{1}{2}\|p\|^2$ as well as the
calibration method approach with total variation regularization to the
data term $\rho(x,z) = (|x| - |z|)^2$ with one-dimensional domain
$\Omega = [-1,1]$ and scalar data $\Gamma = [-1,1]$ using $20$ 
regularly-spaced discretization points (Figure~\ref{fig:result-proj}).

The result from the first-order approach is easily interpretable as a
composition of two solutions to the original problem, each of which can be
obtained by thresholding \eqref{eq:thresholding}.
In contrast, thresholding applied to the result from the second-order
approach yields the two hat functions $v_1(x) = |x|$ and $v_2(x) = -|x|$,
neither of which minimizes the original functional.
Instead, the solution turns out to be of the form
$\mat{u} = \frac{1}{2}\delta_{u_1} + \frac{1}{2}\delta_{u_2}$, where $u_1$ and
$u_2$ are in fact global minimizers of the original problem:
namely, the straight lines $u_1(x) = x$ and $u_2(x) = -x$.

\section{Conclusion}

In this work we presented a novel fully-continuous functional lifting approach for non-convex variational problems that involve Laplacian second-order terms and vectorial data, with the aim to ultimately provide sufficient optimality conditions and find global solutions despite the non-convexity. First experiments indicate that the method can produce subpixel-accurate solutions for the non-convex image registration problem. We argued that more involved projection strategies than in the classical calibration approach will be needed for obtaining a good (approximate) solution of the original problem from a solution of the lifted problem. 
Another interesting direction for future work is the generalization to functionals
that involve arbitrary second- or higher-order terms.

\para{Acknowledgments} The authors acknowledge support through DFG grant LE \mbox{4064/1-1} ``Functional Lifting 2.0: Efficient Convexifications for Imaging and Vision'' and NVIDIA Corporation.


\bibliographystyle{splncs04}
\bibliography{paper}

\end{document}